\documentclass[11pt]{amsart}

\usepackage{mlmodern}

\usepackage{amsmath,amssymb}

\newtheorem{theorem}{Theorem}[section]
\newtheorem{corollary}[theorem]{Corollary}
\newtheorem{lemma}[theorem]{Lemma}
\newtheorem{proposition}[theorem]{Proposition}
\theoremstyle{remark}
\newtheorem{remark}[theorem]{Remark}

\newcommand{\R}{\mathbb R}
\newcommand{\norm}[1]{\lVert #1\rVert}
\newcommand{\E}{\mathbf{E}}
\newcommand{\Prob}{\mathbf{P}}

\title[]{A sharp density bound for infinite convolutions}

\author[]{Peio Alcorta}
\address{Department of Quantitative Methods, University of the Basque Country (EHU), Avda Lehendakari Aguirre 83, Bilbao, 48015, Spain}
\email{peio.alcorta@ehu.eus}

\subjclass[2020]{Primary 60E15; Secondary 60E10, 60G50}
\keywords{Infinite convolution, bounded density, convolution powers}

\begin{document}

\begin{abstract}
We prove a sharp $L^\infty$ bound for the density of an infinite convolution
in terms of $L^2$ norms of convolution powers of the summand densities. The
bound may be finite even when every summand density is unbounded and the
products arising from Young's inequality diverge.
\end{abstract}

\maketitle

\section{Introduction}

Sharp forms of Young's convolution inequality \cite{Beckner,BrascampLieb}
and maximum-density bounds for sums of independent random variables
\cite{BobkovChistyakovDensity,Rogozin} are classical, while the existence and regularity of infinite convolutions have long been studied
\cite{JessenWintner,Wintner}. To the best of our knowledge, no sharp $L^\infty$ bound for the density of an infinite convolution has
been given in terms of $L^2$ norms of convolution powers of the summand densities. We provide such a bound without requiring the summand densities to be bounded.

For illustration, let $X_k$ be independent random vectors in $\R^d$ with densities $f_k$, and suppose that their partial sums converge in distribution
to some $S$ with density $f_S$. For $f\in L^1(\R^d)$ and $n\geq1$, let $f^{*n}$ denote the $n$-fold convolution of $f$ with itself. Under suitable assumptions, a simple dyadic instance of the bound is
\[
 \norm{f_S}_\infty
 \leq
 \norm{f_1}_2\,
 \norm{f_2^{*2}}_2^{1/2}\,
 \norm{f_3^{*4}}_2^{1/4}\cdots.
\]

In fact, the bound may remain finite even when every summand density is unbounded and the products arising from Young's inequality and its sharp form diverge. Moreover, an example shows that the partial-sum densities may remain unbounded up to any prescribed finite index, while the limit has a bounded density. The estimate also has an information-theoretic interpretation in terms of R\'enyi entropy and yields an anti-concentration inequality.

\section{Density bounds for infinite convolutions}

\begin{lemma}\label{lem:endpoint}
Let $(\Omega,\mathcal F,\mu)$ be a $\sigma$-finite measure space. Let
$q_m\downarrow1$, and let $g_m,g$ be complex-valued measurable functions on $\Omega$ such 
that $g_m\to g$ almost everywhere. Then
\[
 \norm{g}_1\leq\liminf_{m\to\infty}\norm{g_m}_{q_m}.
\]
\end{lemma}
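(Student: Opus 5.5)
We will reduce the statement to Fatou's lemma, applied on sets of finite measure, followed by a truncation argument. We may assume $L:=\liminf_m\norm{g_m}_{q_m}<\infty$, and after passing to a subsequence that $\norm{g_m}_{q_m}\to L$. Since $\mu$ is $\sigma$-finite, fix $E\subseteq\Omega$ with $\mu(E)<\infty$ and a level $M>0$. We work with the truncations $h_m:=\min(|g_m|,M)\mathbf 1_E$ and $h:=\min(|g|,M)\mathbf 1_E$. Then $h_m\to h$ almost everywhere, and $h_m\le M\mathbf 1_E$ is integrable. By dominated convergence, $\int h_m\,d\mu\to\int h\,d\mu$.

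Next we compare $\int h_m$ with $\norm{g_m}_{q_m}$ using H\"older's inequality on $E$:
\[
 \int h_m\,d\mu\le \norm{h_m}_{q_m}\,\mu(E)^{1-1/q_m}\le \norm{g_m}_{q_m}\,\mu(E)^{1-1/q_m}.
\]
Since $q_m\downarrow1$ and $0<\mu(E)<\infty$, the factor $\mu(E)^{1-1/q_m}$ tends to $1$. The case $\mu(E)=0$ is trivial. Letting $m\to\infty$ gives $\int_E\min(|g|,M)\,d\mu\le L$.

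Finally, let $M\uparrow\infty$ and let $E$ increase through an exhausting sequence $E_j\uparrow\Omega$ of finite-measure sets. By monotone convergence, $\norm{g}_1\le L$.

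The only delicate point is the H\"older step, which replaces the varying exponent $q_m$ by the fixed exponent $1$. It requires finite measure, so we localize to $E$ first. Because the limit of $\mu(E)^{1-1/q_m}$ is taken with $E$ fixed, it introduces no uniformity issue. The truncation by $M$ is what makes dominated convergence available; without it Fatou's lemma would give the same inequality for $\int_E|g|$ directly. In fact we can skip the truncation entirely: Fatou gives $\int_E|g|\le\liminf\int_E|g_m|\le \liminf\norm{g_m}_{q_m}\mu(E)^{1-1/q_m}=L$, which is the argument we will actually write.
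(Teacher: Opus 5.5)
Your final argument (Fatou on a finite-measure set $E$, H\"older to bound $\int_E|g_m|\,d\mu$ by $\norm{g_m}_{q_m}\mu(E)^{1-1/q_m}$, then exhausting $\Omega$ by sets $E_j\uparrow\Omega$ and applying monotone convergence) is correct and is essentially the paper's proof. The truncation by $M$ and the passage to a subsequence are harmless but unnecessary, as you note yourself.
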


\begin{proof}
The assertion is immediate if $\mu(\Omega)=0$. Otherwise, choose measurable sets $A_j\uparrow\Omega$ with $0<\mu(A_j)<\infty$. For every
$j$, Fatou's lemma and H\"older's inequality on $A_j$ give
\[
 \int_{A_j}|g|\,d\mu
 \leq\liminf_{m\to\infty}\int_{A_j}|g_m|\,d\mu
 \leq\liminf_{m\to\infty}
\mu(A_j)^{1-1/q_m}\norm{g_m}_{q_m}.
\]
Since $\lim_{m\to\infty}\mu(A_j)^{1-1/q_m}=1$, we obtain $\int_{A_j}|g|\,d\mu \leq \liminf_{m\to\infty}\norm{g_m}_{q_m}$. The assertion follows by letting $j\to\infty$ and applying the monotone convergence theorem to $|g|\mathbf{1}_{A_j}\uparrow |g|$.
\end{proof}

\begin{theorem}\label{thm:main}
Let $X_k$ be independent random vectors in $\R^d$ with characteristic functions $\varphi_k(t)=\E e^{it\cdot X_k}$. Suppose that $S_N=\sum_{k=1}^N X_k$ converges in distribution to some $S$. Let $p_k>1$ satisfy $\sum_{k=1}^\infty p_k^{-1}=1$. If $\varphi_k\in L^{p_k}(\R^d)$, then
\begin{enumerate}
\item[(\textit{i})] The sequence $(P_m)$ with $P_m=\prod_{k=1}^m\norm{\varphi_k}_{p_k}$ either converges to a finite positive value or tends to infinity.
\item[(\textit{ii})] If $P_m$ converges, $S$ has a uniformly continuous density $f_S$ vanishing at infinity and satisfying the sharp bound
\[
 \norm{f_S}_\infty
 \leq (2\pi)^{-d}
 \prod_{k=1}^\infty\norm{\varphi_k}_{p_k}.
\]
\end{enumerate}
\end{theorem}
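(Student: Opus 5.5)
For (i), the plan is to use that each characteristic function satisfies $|\varphi_k|\le 1$ and $\varphi_k(0)=1$. Since $\varphi_k\in L^{p_k}$, continuity at $0$ gives a neighbourhood where $|\varphi_k|$ is close to $1$. That alone does not bound $\norm{\varphi_k}_{p_k}$ from below by $1$, so monotonicity of $P_m$ is not automatic. I would therefore aim to show that the partial products are eventually bounded below and that $\log P_m$ has a limit in $(-\infty,\infty]$.

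The key step is a Hölder argument. Set $r_m=(1-\sum_{k\le m}p_k^{-1})^{-1}\to\infty$, so that $\sum_{k\le m}p_k^{-1}+r_m^{-1}=1$. Apply generalized Hölder to $\prod_{k\le m}\varphi_k\cdot\psi$ with $\psi$ the indicator of a small ball. Alternatively, compare with the limit characteristic function $\varphi_S=\lim_N\prod_{k\le N}\varphi_k$ (Lévy continuity), which is continuous with $\varphi_S(0)=1$. Take a ball $B$ of radius $\delta$ on which $|\varphi_S|\ge 1/2$. Since $|\prod_{k\le N}\varphi_k|\ge|\varphi_S|$ (the tail factors have modulus $\le 1$), we get
\[
 \tfrac12|B|\le\int_B\Bigl|\prod_{k=1}^m\varphi_k\Bigr|\le \prod_{k=1}^m\norm{\varphi_k}_{p_k}\,|B|^{1/r_m}.
\]
Hence $P_m\ge \tfrac12|B|^{1-1/r_m}$, which is bounded below uniformly in $m$. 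More precisely, $P_n/P_m$ is controlled from below by applying the same estimate to the tail block $\prod_{k=m+1}^n\varphi_k$, whose Hölder exponents sum to less than $1$. So the liminf and limsup of $\log P_m$ can be compared, and together with $\norm{\varphi_k}_{p_k}\to$ something whose $\log$ has summable negative parts, this gives convergence in $(0,\infty]$. The main obstacle I expect is exactly this step: showing that the negative parts $\sum_k(\log\norm{\varphi_k}_{p_k})^-$ are finite. My first attempt will be to use $\norm{\varphi_k}_{p_k}\ge(\int_B|\varphi_k|^{p_k})^{1/p_k}$ with $|\varphi_k|$ near $1$ on $B$ for large $k$ (uniformly, since the tails converge to $1$ near $0$). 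This gives $\log\norm{\varphi_k}_{p_k}\ge p_k^{-1}\log(c|B|)$, which is summable because $\sum p_k^{-1}=1$. Then $\log P_m$ is a sum whose negative parts are summable, so it converges or diverges to $+\infty$.

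For (ii), assuming $P=\lim P_m<\infty$, I would show $\varphi_S\in L^1$ with $\norm{\varphi_S}_1\le P$. By generalized Hölder with exponents $p_1,\dots,p_m,r_m$, and $|\prod_{k>m}\varphi_k|\le 1$ pointwise, one would rather bound $g_m=\prod_{k\le m}\varphi_k$ in $L^{q_m}$ with $q_m^{-1}=\sum_{k\le m}p_k^{-1}\uparrow 1$, obtaining $\norm{g_m}_{q_m}\le P_m$. Since $g_m\to\varphi_S$ pointwise, Lemma~\ref{lem:endpoint} with $q_m\downarrow1$ gives $\norm{\varphi_S}_1\le\liminf P_m=P$. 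Fourier inversion for an integrable characteristic function then yields a density
\[
 f_S(x)=(2\pi)^{-d}\int e^{-it\cdot x}\varphi_S(t)\,dt,
\]
which is uniformly continuous and vanishes at infinity by Riemann--Lebesgue, with $\norm{f_S}_\infty\le(2\pi)^{-d}\norm{\varphi_S}_1$. Sharpness would follow from Gaussian examples, where Hölder is tight only in the limit; I would record equality cases via scaling of Gaussian summands with variances tuned to $p_k$.
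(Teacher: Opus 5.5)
Your part (ii) follows the paper's argument: finite H\"older in $L^{q_m}$, then Lemma~\ref{lem:endpoint}, Fourier inversion and Riemann--Lebesgue. Part (i), however, has a genuine gap exactly at the step you flag.

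Your bound $\log\norm{\varphi_k}_{p_k}\ge p_k^{-1}\log(c|B|)$, with $c$ independent of $k$, is equivalent to $\int_B|\varphi_k|^{p_k}\ge c|B|$. This does not follow from $|\varphi_k|$ being uniformly close to $1$ on $B$. A bound $|\varphi_k|\ge c_k$ only gives $|\varphi_k|^{p_k}\ge c_k^{p_k}$, and $p_k\to\infty$ because $\sum_k p_k^{-1}<\infty$. The bound is in fact false. Take $d=1$, $p_k=2^k$ and $X_k\sim\mathcal N(0,k^{-2})$: all hypotheses hold, but $\norm{\varphi_k}_{p_k}^{p_k}=(2\pi k^2 2^{-k})^{1/2}\to0$. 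What a uniform lower bound honestly gives is $\log\norm{\varphi_k}_{p_k}\ge\log\inf_B|\varphi_k|+p_k^{-1}\log|B|$. Summing this needs $\sum_k\sup_B(-\log|\varphi_k|)<\infty$, which does not follow from the pointwise identity $\sum_k\log|\varphi_k(t)|=\log|\varphi_S(t)|$ alone, because the supremum sits outside the sum.

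The ingredient you are missing, which the paper uses, is Jensen's inequality for $\log$ with respect to normalized Lebesgue measure on $B$. It gives $\log\norm{\varphi_k}_{p_k}\ge p_k^{-1}\log|B|+|B|^{-1}\int_B\log|\varphi_k|$, so the factor $p_k$ cancels. With $|B|<1$, summation then yields $\sum_{k\le m}\min\{0,\log\norm{\varphi_k}_{p_k}\}\ge\log|B|+|B|^{-1}\int_B\log|\varphi_S|>-\infty$.

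Alternatively, your tail-block idea can be completed without any summability of negative parts, but not with ``the same estimate''. The constant $\tfrac12$ only yields $\liminf\log P_n\ge\limsup\log P_m-\log2$, which does not rule out oscillation. You need a constant tending to $1$. For $n>m$ and $|B|<1$, H\"older on $B$ gives $P_n/P_m\ge|B|^{s_m}\,|B|^{-1}\int_B\prod_{k>m}|\varphi_k|$ with $s_m=\sum_{k>m}p_k^{-1}\to0$. The average tends to $1$ by dominated convergence, because $\prod_{k>m}|\varphi_k|=|\varphi_S|/\prod_{k\le m}|\varphi_k|\to1$ on $B$. Hence $\liminf_n\log P_n\ge\limsup_m\log P_m$, which together with your lower bound on $P_m$ gives (i).

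For sharpness, the Gaussians $X_k\sim\mathcal N(0,p_k^{-1}V)$ have $\varphi_S=\varphi_k^{p_k}$ for every $k$. So H\"older is exact at every finite stage, not only in the limit. The remaining equality $\norm{f_S}_\infty=f_S(0)=(2\pi)^{-d}\norm{\varphi_S}_1$ uses $\varphi_S>0$.
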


\begin{proof}
Let $\varphi_S$ be the characteristic function of $S$. Since $\varphi_S$ is
continuous with $\varphi_S(0)=1$, choose a ball $B$ centered at the origin with Lebesgue measure $|B|<1$ and $\inf_B|\varphi_S|>0$. By independence and convergence in distribution, $\prod_{k=1}^m\varphi_k\to\varphi_S$ pointwise. Since $|\varphi_k|\leq1$, on $B$ we have $|\varphi_k|\geq|\varphi_S|>0$ for every $k$, and $\sum_{k=1}^\infty\log|\varphi_k|=\log|\varphi_S|$. 

Restricting the $L^{p_k}$ norm to $B$ and applying Jensen's inequality to
$\log$ with respect to normalized Lebesgue measure on $B$, we obtain
\[
\log\norm{\varphi_k}_{p_k} \geq 
p_k^{-1}\log|B|+|B|^{-1}\int_B\log|\varphi_k(t)|\,dt.
\]
Since $|\varphi_k|\leq1$ and $|B|<1$, the right-hand side is negative. Thus the same bound holds with $\log\norm{\varphi_k}_{p_k}$ replaced by $a_k=\min\{0,\log\norm{\varphi_k}_{p_k}\}$. Summing over $k\leq m$ and using $\sum_{k=1}^m p_k^{-1}\leq 1$, we obtain
\[
\sum_{k=1}^m a_k \geq \log|B|+|B|^{-1}\int_B \sum_{k=1}^m \log|\varphi_k(t)|\,dt.
\]
Since $\log|\varphi_k|\leq 0$, on $B$ we have $\sum_{k=1}^m \log|\varphi_k(t)| \downarrow\log|\varphi_S(t)|$. Thus $\sum_{k=1}^m a_k$ is bounded below by a finite constant and is nonincreasing, since $a_k\leq 0$. Hence it converges. Moreover, 
\[
\log P_m-\sum_{k=1}^m a_k = \sum_{k=1}^m\max\{0,\log\norm{\varphi_k}_{p_k}\}
\]
is nondecreasing, and hence converges or tends to $+\infty$. Since $\sum_{k=1}^m a_k$ converges, $\log P_m$ converges or tends to $+\infty$, proving (\textit{i}).

To prove (\textit{ii}), suppose that $P_m$ converges and set $q_m^{-1}=\sum_{k=1}^m p_k^{-1}$. H\"older's inequality applied to the factors $|\varphi_k|^{q_m}$ with exponents $p_k/q_m$ gives $\norm{\prod_{k=1}^m\varphi_k}_{q_m} \leq P_m$. Since $q_m\downarrow1$ and $\prod_{k=1}^m\varphi_k\to\varphi_S$ pointwise,
Lemma~\ref{lem:endpoint} gives $\norm{\varphi_S}_1\leq
\prod_{k=1}^\infty\norm{\varphi_k}_{p_k}$. Thus $\varphi_S\in L^1(\R^d)$. By Fourier inversion and the Riemann--Lebesgue lemma, $S$ has a uniformly continuous density vanishing at infinity, with $\norm{f_S}_\infty\leq(2\pi)^{-d}\norm{\varphi_S}_1$.

For sharpness, we show that equality is attained for suitable Gaussian summands. Let $X_k\sim\mathcal N(0,p_k^{-1}V)$ be independent, where $V$ is a symmetric positive definite matrix. Since $\varphi_k(t)=e^{-t\cdot Vt/(2p_k)}$, we have $\prod_{k=1}^\infty\varphi_k(t)=e^{-t\cdot Vt/2}$. Hence $S_N$ converges in distribution to $S\sim\mathcal N(0,V)$, and $\varphi_S=\varphi_k^{p_k}$ for every $k$. Therefore $\norm{\varphi_S}_1=\prod_{k=1}^\infty\norm{\varphi_k}_{p_k}$. On the other hand, $\norm{f_S}_\infty=f_S(0)$ since $f_S$ is a centered Gaussian, while
$f_S(0)=(2\pi)^{-d}\norm{\varphi_S}_1$ by Fourier inversion and the positivity of $\varphi_S$. Hence $\norm{f_S}_\infty=(2\pi)^{-d}\prod_{k=1}^\infty\norm{\varphi_k}_{p_k}$.
\end{proof}

Plancherel's theorem rewrites the preceding estimate in terms of convolution powers. The resulting bound may be finite even when every $f_k$ is unbounded; see Section~\ref{sec:delayed}.

\begin{theorem}\label{thm:convolution}
Let $X_k$ be independent random vectors in $\R^d$ with densities $f_k$. Suppose that $S_N=\sum_{k=1}^N X_k$ converges in distribution to some $S$. Let $n_k$ be positive integers satisfying $\sum_{k=1}^\infty (2n_k)^{-1}=1$. If $f_k^{*n_k}\in L^2(\R^d)$, then 
    \begin{enumerate}
\item[(\textit{i})] The sequence $(Q_m)$ with $Q_m=\prod_{k=1}^m\norm{f_k^{*n_k}}_2^{1/n_k}$ either converges to a finite positive value or tends to infinity.
\item[(\textit{ii})] If $Q_m$ converges, $S$ has a uniformly continuous density $f_S$ vanishing at infinity and satisfying the sharp bound
\[
 \norm{f_S}_\infty
 \leq
 \prod_{k=1}^\infty\norm{f_k^{*n_k}}_2^{1/n_k}.
\]
\end{enumerate}
\end{theorem}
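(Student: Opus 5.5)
The plan is to reduce Theorem~\ref{thm:convolution} to Theorem~\ref{thm:main} by taking $p_k=2n_k$ and converting the $L^{p_k}$ norms of the characteristic functions into $L^2$ norms of convolution powers via Plancherel. Since $\sum_k (2n_k)^{-1}=1$ and $n_k\geq1$, we have $p_k=2n_k\geq 2>1$ and $\sum_k p_k^{-1}=1$, so the exponent hypotheses match.

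First I would compute the norms. The characteristic function of $f_k^{*n_k}$ is $\varphi_k^{n_k}$, and $f_k^{*n_k}\in L^1\cap L^2$ by assumption. Plancherel's theorem, with the paper's normalization $\varphi(t)=\int e^{it\cdot x}f(x)\,dx$, gives
\[
 \norm{\varphi_k^{n_k}}_2^2=(2\pi)^d\norm{f_k^{*n_k}}_2^2 .
\]
Hence $\norm{\varphi_k}_{2n_k}^{2n_k}=\int|\varphi_k|^{2n_k}=(2\pi)^d\norm{f_k^{*n_k}}_2^2$. In particular $\varphi_k\in L^{p_k}$, and
\[
 \norm{\varphi_k}_{p_k}=(2\pi)^{d/(2n_k)}\norm{f_k^{*n_k}}_2^{1/n_k}.
\]

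Next I multiply over $k\leq m$. This gives $P_m=(2\pi)^{d\,r_m}Q_m$ with $r_m=\sum_{k\leq m}(2n_k)^{-1}\uparrow 1$. Since the factor $(2\pi)^{d r_m}$ converges to the positive number $(2\pi)^d$, the sequence $Q_m$ converges to a finite positive limit or diverges to infinity exactly when $P_m$ does. Part (\textit{i}) then follows from Theorem~\ref{thm:main}(\textit{i}). For (\textit{ii}), Theorem~\ref{thm:main}(\textit{ii}) gives a uniformly continuous density vanishing at infinity, and
\[
 \norm{f_S}_\infty\leq(2\pi)^{-d}\lim_m P_m=(2\pi)^{-d}(2\pi)^d\prod_k\norm{f_k^{*n_k}}_2^{1/n_k},
\]
which is the stated bound.

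For sharpness I reuse the Gaussian example from Theorem~\ref{thm:main} with $p_k=2n_k$, taking $X_k\sim\mathcal N(0,(2n_k)^{-1}V)$. Equality holds there, and the identity above transfers it exactly, so the bound is attained. The main obstacle is only bookkeeping: fixing the Plancherel constant under the paper's Fourier convention and confirming that the exponents $r_m$ sum to exactly $1$. Both are routine checks.
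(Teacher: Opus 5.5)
Your proposal is correct and follows the paper's proof. Plancherel gives $\norm{\varphi_k}_{2n_k}=(2\pi)^{d/(2n_k)}\norm{f_k^{*n_k}}_2^{1/n_k}$, so $P_m=(2\pi)^{d c_m}Q_m$ with $c_m\uparrow 1$, and both parts then follow from Theorem~\ref{thm:main}. The only difference is cosmetic: for sharpness you transfer the Gaussian equality of Theorem~\ref{thm:main} through this identity, while the paper checks it directly via $\prod_k\norm{f_k^{*n_k}}_2^{1/n_k}=\norm{\phi}_2^2=f_S(0)$, where $\phi$ is the $\mathcal N(0,V/2)$ density. Both arguments are valid.
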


\begin{proof}
Let $\varphi_k$ be the characteristic function of $X_k$. Since the density $f_k^{*n_k}$ belongs to $L^1(\R^d)\cap L^2(\R^d)$ and has characteristic function $\varphi_k^{n_k}$, Plancherel's theorem gives
\[
\norm{\varphi_k}_{2n_k}
=(2\pi)^{d/(2n_k)}\norm{f_k^{*n_k}}_2^{1/n_k}.
\]
Let $P_m=\prod_{k=1}^m\norm{\varphi_k}_{2n_k}$ and $c_m=\sum_{k=1}^m(2n_k)^{-1}$. Then $P_m=(2\pi)^{c_m d}Q_m$. Since $c_m\uparrow1$, (\textit{i}) follows from Theorem~\ref{thm:main}(\textit{i}) applied with $p_k=2n_k$. If $Q_m\to Q$, then $P_m\to(2\pi)^d Q$, and Theorem~\ref{thm:main}(\textit{ii}) yields $\norm{f_S}_\infty\leq Q$.

For sharpness, specialize the Gaussian case of Theorem~\ref{thm:main} with $p_k=2n_k$ by taking $X_k\sim \mathcal N\!\left(0, (2n_k)^{-1}V\right)$. Then $f_k^{*n_k}$ is the $\mathcal N(0,V/2)$ density for every $k$. Denote this common density by $\phi$. Since $\sum_{k=1}^\infty n_k^{-1}=2$ and $f_S=\phi^{*2}$, we have
\[
\prod_{k=1}^\infty\norm{f_k^{*n_k}}_2^{1/n_k}
=\norm{\phi}_2^2=f_S(0)=\norm{f_S}_\infty.\qedhere
\]
\end{proof}

A particularly simple admissible choice of the sequence $(n_k)$ is the dyadic
one, for which the bound takes an explicit form.

\begin{corollary}[Dyadic form]\label{cor:dyadic} 
In the setting of Theorem~\ref{thm:convolution}, let $n_k=2^{k-1}$. If the product below converges, then the following sharp bound holds:
\[
 \norm{f_S}_\infty
 \leq \norm{f_1}_2\,
       \norm{f_2^{*2}}_2^{1/2}\,
       \norm{f_3^{*4}}_2^{1/4}\cdots.
\]
\end{corollary}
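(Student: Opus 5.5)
The corollary is a direct specialization of Theorem~\ref{thm:convolution}, so the plan is to check its hypotheses for $n_k=2^{k-1}$ and then read off its conclusion.

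\emph{Admissibility.} First we verify that $n_k=2^{k-1}$ is an admissible sequence:
\[
 \sum_{k=1}^\infty (2n_k)^{-1}=\sum_{k=1}^\infty 2^{-k}=1 .
\]
The standing assumptions of Theorem~\ref{thm:convolution} are taken over verbatim from ``the setting'': the $X_k$ are independent with densities $f_k$, $S_N\to S$ in distribution, and $f_k^{*2^{k-1}}\in L^2(\R^d)$ for every $k$. The last condition is implicit in the displayed product, since each factor must be finite for the product to make sense.

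\emph{Reading off the bound.} With this choice, $Q_m=\prod_{k=1}^m\norm{f_k^{*2^{k-1}}}_2^{2^{1-k}}$, which is exactly the $m$-th partial product on the right-hand side of the corollary. ``The product converges'' means that $Q_m$ tends to a finite limit. By Theorem~\ref{thm:convolution}(\textit{i}), such a limit is automatically positive, so there is no degenerate case to exclude. Theorem~\ref{thm:convolution}(\textit{ii}) then gives that $S$ has a uniformly continuous density vanishing at infinity, together with the displayed bound.

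\emph{Sharpness.} This is the only point needing care, because the extremal example must be compatible with the specific sequence $n_k=2^{k-1}$. Take $X_k\sim\mathcal N(0,2^{-k}V)$. This is precisely the Gaussian family used in the proof of Theorem~\ref{thm:convolution} with $(2n_k)^{-1}=2^{-k}$. For it, $f_k^{*2^{k-1}}$ is the $\mathcal N(0,V/2)$ density $\phi$ for every $k$. Hence the product equals $\norm{\phi}_2^{\sum_k 2^{1-k}}=\norm{\phi}_2^2=f_S(0)=\norm{f_S}_\infty$, so equality holds. No real obstacle is expected: the whole argument is bookkeeping on top of Theorem~\ref{thm:convolution}.
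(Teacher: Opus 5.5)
Your proposal is correct and matches the paper, which treats the dyadic form as an immediate specialization of Theorem~\ref{thm:convolution} once one checks that $\sum_k (2n_k)^{-1}=\sum_k 2^{-k}=1$. Your sharpness argument is exactly that theorem's Gaussian equality case with $X_k\sim\mathcal N(0,2^{-k}V)$, where each $f_k^{*2^{k-1}}$ is the common $\mathcal N(0,V/2)$ density.
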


Our proof of Theorem~\ref{thm:main} uses H\"older's inequality only for finitely many factors together with Lemma~\ref{lem:endpoint}, and in addition
yields the dichotomy in Theorem~\ref{thm:main}(\textit{i}) for the partial products $P_m$. The estimate in Theorem~\ref{thm:main}(\textit{ii}) alone can alternatively be obtained from countable versions of H\"older's inequality \cite{ChenJiaJiao,Karakostas}, applied to
$|\varphi_S|=\prod_{k=1}^\infty|\varphi_k|$. For finitely many summands with
a common exponent, the corresponding Fourier estimate appears in the proof of
\cite[Prop.~3]{BobkovGotze}.

Theorem~\ref{thm:convolution} immediately yields a sharp anti-concentration inequality, connecting the estimate with the classical theory of concentration functions for sums of independent random variables; see \cite{Petrov}.

\begin{corollary}[Anti-concentration]
In the setting of Theorem~\ref{thm:convolution}, if the product below converges, then
\[
\Prob(S\in A)
\leq
|A|\prod_{k=1}^\infty\norm{f_k^{*n_k}}_2^{1/n_k}
\]
for every Borel set $A\subset\R^d$ with $|A|<\infty$. The bound is sharp.
\end{corollary}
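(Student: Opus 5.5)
By Theorem~\ref{thm:convolution}(\textit{ii}), convergence of the product gives $S$ a density $f_S$ with
\[
\norm{f_S}_\infty\leq Q:=\prod_{k=1}^\infty\norm{f_k^{*n_k}}_2^{1/n_k}.
\]
For any Borel set $A$ with $|A|<\infty$ we then write
\[
\Prob(S\in A)=\int_A f_S(x)\,dx\leq |A|\,\norm{f_S}_\infty\leq |A|\,Q,
\]
which is the inequality. Nothing more is needed here: absolute continuity of the law of $S$ is part of the conclusion of Theorem~\ref{thm:convolution}(\textit{ii}).

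Sharpness is the only step needing an idea. We use the Gaussian extremal example from the proof of Theorem~\ref{thm:convolution}: take $X_k\sim\mathcal N(0,(2n_k)^{-1}V)$, so that $S\sim\mathcal N(0,V)$ and $Q=f_S(0)=\norm{f_S}_\infty$. Let $A_r$ be the ball of radius $r$ centered at $0$. Since $f_S$ is continuous,
\[
\frac{\Prob(S\in A_r)}{|A_r|}=\frac{1}{|A_r|}\int_{A_r}f_S\,dx\to f_S(0)=Q
\]
as $r\to0$. Hence the constant $Q$ cannot be replaced by any smaller one uniformly in $A$, so the bound is sharp in the sense of the best constant.

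Equality is not attained for any fixed set $A$ of positive measure, because $f_S<f_S(0)$ away from $0$. The sharpness claim should therefore be stated and proved as optimality of the constant, via the limit above, rather than as equality for some particular $A$.
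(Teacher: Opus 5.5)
Your proposal is correct and follows the paper's proof essentially verbatim: the inequality is immediate from $\norm{f_S}_\infty\le \prod_{k=1}^\infty\norm{f_k^{*n_k}}_2^{1/n_k}$ in Theorem~\ref{thm:convolution}(\textit{ii}), and sharpness comes from the Gaussian equality case via $\Prob(S\in B_\varepsilon)/|B_\varepsilon|\to f_S(0)=\norm{f_S}_\infty$. Your added remark, that sharpness means optimality of the constant rather than equality for a fixed set of positive measure, is accurate and consistent with how the paper argues it.
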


\begin{proof}
The bound is immediate. For sharpness, consider the Gaussian equality case from the proof of Theorem~\ref{thm:convolution}. Let $B_\varepsilon$ be the ball of radius $\varepsilon$ centered at the origin. Then, as $\varepsilon\downarrow0$,
\[
\Prob(S\in B_\varepsilon)/|B_\varepsilon|\to f_S(0)=\norm{f_S}_\infty.\qedhere
\]
\end{proof}

The hypotheses of Theorem~\ref{thm:convolution} involve the convolution powers $f_k^{*n_k}$. Young's inequality yields the following sufficient condition in terms of the $f_k$, at the cost of a coarser bound.

\begin{corollary}[Young-type criterion]\label{cor:young}
Assume the hypotheses of Theorem~\ref{thm:convolution}, except for the condition $f_k^{*n_k}\in L^2(\R^d)$. Set $r_k=2n_k/(2n_k-1)$ and suppose that $f_k\in L^{r_k}(\R^d)$ for every $k$. Let $R_m=\prod_{k=1}^m\norm{f_k}_{r_k}$. If the partial products $R_m$ converge, then $S$ has a uniformly continuous density vanishing at infinity and
\[
 \norm{f_S}_\infty\leq\prod_{k=1}^\infty \norm{f_k}_{r_k}.
\]
\end{corollary}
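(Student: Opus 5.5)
The plan is to reduce to Theorem~\ref{thm:convolution}(\textit{ii}) by showing, via Young's inequality, that each convolution power $f_k^{*n_k}$ lies in $L^2(\R^d)$ and satisfies
\[
 \norm{f_k^{*n_k}}_2\leq\norm{f_k}_{r_k}^{n_k}.
\]
Taking $n_k$-th roots gives $\norm{f_k^{*n_k}}_2^{1/n_k}\leq\norm{f_k}_{r_k}$. This yields $Q_m\leq R_m$ for every $m$, and the convergence of $R_m$ will force $Q_m$ to converge.

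\textbf{Step 1 (exponent bookkeeping).} Young's inequality for $n$ factors reads $\norm{g_1*\cdots*g_n}_s\leq\prod_i\norm{g_i}_{r}$ whenever $n/r=n-1+1/s$. I take $s=2$ and $n=n_k$, which requires $1/r=1-1/(2n_k)$. This is exactly $r=r_k=2n_k/(2n_k-1)$, so the exponents match. I would iterate the two-function Young inequality, at each stage checking that the intermediate exponent lies in $[1,\infty]$. Concretely, if $g\in L^a$ and $f_k\in L^{r_k}$, then $g*f_k\in L^b$ with $1/b=1/a+1/r_k-1=1/a-1/(2n_k)$. Starting from $a=r_k$, after $j$ convolutions the exponent satisfies $1/b=1-j/(2n_k)$. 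This stays in $[1/2,1]$ for $j\leq n_k$, so the iteration is legitimate and ends at $L^2$ when $j=n_k-1$ additional convolutions have been performed. The case $n_k=1$ is trivial, since then $r_k=2$. Because $f_k^{*n_k}$ is a probability density, it also lies in $L^1$, so the hypothesis $f_k^{*n_k}\in L^2(\R^d)$ of Theorem~\ref{thm:convolution} holds.

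\textbf{Step 2 (convergence of $Q_m$).} By Theorem~\ref{thm:convolution}(\textit{i}), $Q_m$ either converges to a finite positive value or tends to infinity. Since $0\leq Q_m\leq R_m$ and $R_m$ converges, $Q_m$ is bounded, so $Q_m$ converges. Passing to the limit in $Q_m\leq R_m$ gives $\prod_k\norm{f_k^{*n_k}}_2^{1/n_k}\leq\prod_k\norm{f_k}_{r_k}$.

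\textbf{Step 3 (conclusion).} Theorem~\ref{thm:convolution}(\textit{ii}) now provides a uniformly continuous density vanishing at infinity, with $\norm{f_S}_\infty\leq\lim Q_m\leq\prod_k\norm{f_k}_{r_k}$. The only point needing care is Step~1, namely verifying that the iterated Young exponents stay admissible; the rest is direct. I use plain Young's inequality rather than its sharp form, since the sharp constants are at most $1$ and are not needed for the stated bound.
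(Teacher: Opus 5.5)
Your proposal is correct and follows the paper's proof exactly: Young's inequality gives $\norm{f_k^{*n_k}}_2^{1/n_k}\leq\norm{f_k}_{r_k}$, hence $Q_m\leq R_m$; the dichotomy in Theorem~\ref{thm:convolution}(\textit{i}) then forces $Q_m$ to converge, and part (\textit{ii}) yields the bound. There is one harmless indexing slip in Step~1: after $j$ convolutions starting from $a=r_k$ one has $1/b=1-(j+1)/(2n_k)$, which agrees with your correct conclusion that $n_k-1$ convolutions land in $L^2$.
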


\begin{proof}
Set $Q_m=\prod_{k=1}^m\norm{f_k^{*n_k}}_2^{1/n_k}$. Young's inequality gives $f_k^{*n_k}\in L^2(\R^d)$ with $\norm{f_k^{*n_k}}_2^{1/n_k}\leq\norm{f_k}_{r_k}$, and hence $Q_m\leq R_m$ for every $m$. Since $R_m$ converges to some $R$, $Q_m$ is bounded, and therefore converges to some $Q$ by Theorem~\ref{thm:convolution}(\textit{i}). Passing to the limit gives $Q\leq R$, while $\norm{f_S}_\infty\leq Q$ follows from Theorem~\ref{thm:convolution}(\textit{ii}).
\end{proof}

The construction in Section~\ref{sec:delayed} shows that $R_m$ may tend to infinity while $Q_m$ converges, so Theorem~\ref{thm:convolution} may apply when Corollary~\ref{cor:young} does not.

\begin{remark}[Sharp Young]
Using the sharp form of Young's convolution inequality, obtained independently by Beckner \cite{Beckner} and by Brascamp and Lieb \cite{BrascampLieb}, the sufficient condition above can be improved
by replacing each factor $\norm{f_k}_{r_k}$ by
$C_{r_k}^{\,d}\norm{f_k}_{r_k}$, where
$C_r^2=r^{1/r}/(r')^{1/r'}$ and $1/r+1/r'=1$.
As shown in Section~\ref{sec:delayed}, even this sharper sufficient condition may fail: the factors $C_{r_k}^{\,d}\norm{f_k}_{r_k}$ need not form a convergent infinite product, even though the convolution-power product in Theorem~\ref{thm:convolution} converges.
\end{remark}

\section{Arbitrarily delayed boundedness}\label{sec:delayed}

We next show that boundedness of the partial-sum densities can be delayed arbitrarily long, even though the limiting density is bounded. For related boundedness phenomena for convolution powers of unbounded densities, see \cite{PestmanTuerlinckxVanpaemel}.

\begin{proposition}[Delayed boundedness]\label{prop:gamma}
For every $N\geq1$, there exist independent random variables $X_k$ with
unbounded densities $f_k$ such that, for each $p>1$, there exists a $K_p$
for which $f_k\notin L^p(\R)$ whenever $k\geq K_p$. Moreover,
$S_m=\sum_{k=1}^m X_k$ has an unbounded density exactly when $m\leq N$, and it converges almost surely to some $S$ with a bounded continuous density.
\end{proposition}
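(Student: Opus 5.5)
Judging by the label \texttt{prop:gamma}, the construction should use gamma distributions. The plan is to take $X_k\sim\mathrm{Gamma}(\alpha_k,\lambda_k)$ independent, with shape $\alpha_k\in(0,1)$ and rate $\lambda_k>0$. The density $f_k(x)=\lambda_k^{\alpha_k}x^{\alpha_k-1}e^{-\lambda_k x}/\Gamma(\alpha_k)$ is unbounded near $0$, and $f_k\in L^p$ iff $p(1-\alpha_k)<1$. So if $\alpha_k\to0$, then for each $p>1$ we get $f_k\notin L^p$ once $\alpha_k\le 1-1/p$, which defines $K_p$.

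For a fixed common rate $\lambda$, the partial sums are exactly gamma: $S_m\sim\mathrm{Gamma}(A_m,\lambda)$ with $A_m=\sum_{k\le m}\alpha_k$. Its density is unbounded iff $A_m<1$, and bounded and continuous iff $A_m\ge1$ (at $A_m=1$ it is the exponential density, which is bounded). So the shapes must satisfy $A_N<1\le A_{N+1}$ and $\sum\alpha_k<\infty$. Concretely, we would take $\alpha_k=1/(N+1)$ for $k\le N$, which gives $A_N=N/(N+1)<1$. Then $\alpha_{N+1}=1/(N+1)$ gives $A_{N+1}=1$, and after that $\alpha_k=2^{-k}$ for $k\ge N+2$. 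With these choices $\alpha_k\to0$ and $A:=\sum\alpha_k<\infty$.

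For convergence, a common rate keeps everything exactly gamma. Each $X_k\ge0$ has $\E X_k=\alpha_k/\lambda$, so $\sum \E X_k<\infty$. Hence $S_m$ increases almost surely to a finite limit $S$, by monotone convergence. The characteristic functions $\prod_{k\le m}(1-it/\lambda)^{-\alpha_k}=(1-it/\lambda)^{-A_m}$ converge to $(1-it/\lambda)^{-A}$, so $S\sim\mathrm{Gamma}(A,\lambda)$ with $A>1$. Its density is bounded and continuous on $\R$: it vanishes at $0$ since $A>1$. We should also check that $S_m$ is bounded for all $m>N$ (yes, since $A_m\ge1$) and unbounded for $m\le N$ (since $A_m<1$), which gives the ``exactly when'' part.

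There is one subtlety to check. Unbounded means the essential supremum is infinite, and $x^{A_m-1}\to\infty$ as $x\downarrow0$ handles this. Another point is that the statement only requires $f_k\notin L^p$ eventually, which the choice $\alpha_k\to0$ ensures. No step seems to be a genuine obstacle; the main care is in choosing the shapes so that the threshold $1$ is crossed exactly at index $N+1$ while $\sum\alpha_k$ stays finite. It is also worth tying this example to Theorem~\ref{thm:convolution}, by choosing $n_k$ large enough that $f_k^{*n_k}\in L^2$ and the product $Q_m$ converges. Although this is presumably done in the paper afterward, it is not needed for the proposition itself.
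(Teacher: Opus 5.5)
Your construction is correct and follows essentially the paper's route: independent gamma summands with a common rate, additivity of shapes with the boundedness threshold at total shape $1$, monotone convergence for the almost sure limit, and a gamma limit of shape $A>1$. The only real difference is the shape sequence. The paper uses the single geometric sequence $\alpha_k=(1+\delta)2^{-k}$ with $\delta=2^{-N}$, which has two consequences. First, it gives $\theta_{N+1}>1$, so $S_{N+1}$ also has a continuous density; your $S_{N+1}$ is exponential and only bounded, so your phrase ``bounded and continuous iff $A_m\ge1$'' fails at $A_m=1$, though harmlessly, since the statement asks only for boundedness. Second, it makes every $f_k^{*2^{k-1}}$ the same gamma density, which is exactly what the subsequent Beyond Young remark exploits.
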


\begin{proof}
Fix $N\geq1$ and write $\delta=2^{-N}$. Let $X_k$ be independent gamma random variables
with shape $\alpha_k=(1+\delta)2^{-k}$ and rate $1$. For $x>0$, their densities are
\[
 f_k(x)=\frac{x^{\alpha_k-1}e^{-x}}{\Gamma(\alpha_k)},
 \qquad\text{with}\qquad
 \norm{f_k}_p^p
 =
 \frac{\Gamma(p\alpha_k-p+1)}
 {p^{\,p\alpha_k-p+1}\Gamma(\alpha_k)^p}
\]
for $p>1$ whenever $p\alpha_k-p+1>0$, equivalently $\alpha_k>1-p^{-1}$, which is exactly the condition for $f_k\in L^p(\R)$. Since $\alpha_k\downarrow0$, for each $p>1$ there exists
a $K_p$ such that $f_k\notin L^p(\R)$ whenever $k\geq K_p$. Moreover,
$\alpha_k\leq3/4<1$ for all $k\geq1$, so $f_k(x)\to\infty$ as $x\downarrow0$.

Next, independent gamma variables with a common rate add their shapes, so $S_m$
has the gamma distribution with shape
$\theta_m=(1+\delta)(1-2^{-m})$. Since $X_k\geq0$ a.s., $S_m$ is nondecreasing a.s. Set $S=\sup_m S_m$. Then $S_m\uparrow S$ a.s., and by monotone convergence,
\[
\E S=\lim_{m\to\infty}\E S_m
=\lim_{m\to\infty}\theta_m=1+\delta<\infty,
\]
so $S<\infty$ a.s. Since $\theta_m\uparrow1+\delta$, the distributions of $S_m$ converge to the
gamma distribution with shape $1+\delta$, and hence $S$ has this distribution, whose density is bounded and continuous because $1+\delta>1$.

Finally, if $m\leq N$, then $\theta_m\leq1-\delta^2<1$, so the density of
$S_m$ is unbounded. If $m\geq N+1$, then
$\theta_m\geq(1+\delta)(1-\delta/2)>1$, so the density of $S_m$ is bounded and continuous.
\end{proof}

\begin{remark}[Beyond Young]
For the gamma example above, let $n_k=2^{k-1}$. Then every $f_k^{*n_k}$ is the same gamma density
$\gamma$, with shape $(1+\delta)/2>1/2$. Hence $\gamma\in L^2(\R)$ and $\prod_{k=1}^\infty \norm{f_k^{*n_k}}_2^{1/n_k}=\norm{\gamma}_2^2<\infty$, so Corollary~\ref{cor:dyadic} applies. On the other hand, the example lies outside the sufficient condition of
Corollary~\ref{cor:young}. Indeed, for $r_k=2^k/(2^k-1)$, set
$b_k=r_k\alpha_k-r_k+1$. Then
$b_k=\delta/(2^k-1)>0$, so $f_k\in L^{r_k}(\R)$. Since $0<\alpha_k<1$ and $\Gamma$ is decreasing on $(0,1]$,
we have $1=\Gamma(1)\leq\Gamma(\alpha_k)$. Moreover, since $\Gamma$ is bounded on $[1,2]$, there is a constant $M$ such that $\alpha_k\Gamma(\alpha_k)=\Gamma(1+\alpha_k)\leq M$. Hence $1\leq\Gamma(\alpha_k)\leq M2^k/(1+\delta)$. Raising both sides to the power $r_k-1=(2^k-1)^{-1}$, the resulting upper bound tends to $1$, and therefore $\Gamma(\alpha_k)^{r_k-1}\to1$. 
Also, $1\leq r_k^{b_k}\leq r_k$ with $r_k\downarrow1$, while $x\Gamma(x)\to1$ as $x\downarrow0$. The formula above then yields $\norm{f_k}_{r_k}^{r_k}\to(1+\delta)/\delta$. Since $C_{r_k}\to1$, both $\norm{f_k}_{r_k}$ and
$C_{r_k}\norm{f_k}_{r_k}$ converge to $(1+\delta)/\delta>1$. Thus the products arising from the classical and sharp forms of Young's inequality both diverge.
\end{remark}

\section{An information-theoretic interpretation}

The bound in Theorem~\ref{thm:convolution} admits an information-theoretic
interpretation. For related inequalities for convolutions and finite sums, see \cite{BobkovChistyakov,ChasapisSinghTkocz,DemboCoverThomas,XuMelbourneMadiman}. For a random vector $Y$ with density $\rho$,
let $h_2(Y)=-2\log\norm{\rho}_2$ and $h_\infty(Y)=-\log\norm{\rho}_\infty$
denote its R\'enyi entropies of orders $2$ and $\infty$, the latter also known as min-entropy.

\begin{corollary}[R\'enyi entropy]\label{cor:renyi}
Let $X_k$ be independent random vectors in $\R^d$ with densities $f_k$, and
suppose that $S_N=\sum_{k=1}^N X_k$ converges in distribution to some $S$. Let $n_k$ be positive
integers satisfying $\sum_{k=1}^\infty(2n_k)^{-1}=1$ and
$f_k^{*n_k}\in L^2(\R^d)$. Let $Y_k$ be the sum of $n_k$ independent copies of $X_k$, and set
$\lambda_k=(2n_k)^{-1}$. Then
\begin{enumerate}
\item[(\textit{i})] The sequence $(H_m)$ with 
$H_m=\sum_{k=1}^m\lambda_k h_2(Y_k)$ either converges to a finite value or tends to $-\infty$.
\item[(\textit{ii})] If $H_m$ converges, $S$ satisfies the sharp bound
\[
h_\infty(S)\geq\sum_{k=1}^\infty\lambda_k h_2(Y_k).
\]
\end{enumerate}
\end{corollary}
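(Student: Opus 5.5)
This corollary is a direct translation of Theorem~\ref{thm:convolution} into logarithmic form, so I would simply rewrite its quantities in terms of R\'enyi entropies. Since $Y_k$ is the sum of $n_k$ independent copies of $X_k$, its density is $f_k^{*n_k}$. The definition of $h_2$ then gives
\[
\lambda_k h_2(Y_k)=-\frac{2}{2n_k}\log\norm{f_k^{*n_k}}_2=-\log\norm{f_k^{*n_k}}_2^{1/n_k}.
\]
Summing over $k\le m$ yields $H_m=-\log Q_m$, where $Q_m=\prod_{k=1}^m\norm{f_k^{*n_k}}_2^{1/n_k}$ is the product from Theorem~\ref{thm:convolution}. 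The hypotheses here, namely convergence in distribution, $\sum(2n_k)^{-1}=1$, and $f_k^{*n_k}\in L^2(\R^d)$, are exactly those of that theorem.

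For (\textit{i}), Theorem~\ref{thm:convolution}(\textit{i}) says that $Q_m$ either converges to a finite positive value or tends to $+\infty$. Positivity of the limit is what guarantees that $\log$ is continuous there. Hence $H_m=-\log Q_m$ either converges to a finite value or tends to $-\infty$.

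For (\textit{ii}), if $H_m$ converges, then $Q_m=e^{-H_m}$ converges. Theorem~\ref{thm:convolution}(\textit{ii}) then supplies a density $f_S$ with $\norm{f_S}_\infty\le\prod_{k}\norm{f_k^{*n_k}}_2^{1/n_k}$. This density is continuous and integrates to one, so $\norm{f_S}_\infty>0$. Applying $-\log$, which is decreasing, gives
\[
h_\infty(S)\ge\sum_k\lambda_k h_2(Y_k).
\]

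Sharpness transfers from the Gaussian equality case in the proof of Theorem~\ref{thm:convolution}. There $\norm{f_S}_\infty$ equals the product exactly, so $h_\infty(S)$ equals the sum exactly.

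There is no real obstacle. The only points needing care are that the limit in (\textit{i}) is positive, so that $-\log$ maps it to a finite value, and that $h_\infty(S)$ is well defined because $f_S$ exists and $\norm{f_S}_\infty>0$.
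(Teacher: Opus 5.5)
Your proposal is correct and follows the paper's proof essentially verbatim: you identify $H_m=-\log Q_m$, invoke Theorem~\ref{thm:convolution}(\textit{i}) and (\textit{ii}), and transfer sharpness from the Gaussian equality case. Your extra remarks that the limit of $Q_m$ is positive and that $\norm{f_S}_\infty>0$ are harmless details the paper leaves implicit.
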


\begin{proof}
Let $Q_m=\prod_{k=1}^m\norm{f_k^{*n_k}}_2^{1/n_k}$. Since
$H_m=-\log Q_m$, (\textit{i}) follows from
Theorem~\ref{thm:convolution}(\textit{i}). If $H_m\to H$, then $Q_m\to e^{-H}$, and
Theorem~\ref{thm:convolution}(\textit{ii}) gives
$\norm{f_S}_\infty\leq e^{-H}$. Hence
$h_\infty(S)\geq H$, proving (\textit{ii}). In the Gaussian case of Theorem~\ref{thm:convolution}, each $Y_k$ has the common density $\phi$ and $\norm{f_S}_\infty=\norm{\phi}_2^2$, so
$h_2(Y_k)=h_\infty(S)$ for every $k$. Since $\sum_{k=1}^\infty\lambda_k=1$, equality holds.
\end{proof}

\section*{Funding}

This research did not receive any specific grant from funding agencies in the public, commercial, or not-for-profit sectors.

\section*{Declaration of generative AI and AI-assisted technologies in the manuscript preparation process}

During the preparation of this work, the author used OpenAI's ChatGPT and Anthropic's Claude as auxiliary tools for mathematical development and the editorial preparation of the manuscript. The author directed the use of these tools; suggestions produced by them were critically assessed, revised, or discarded by the author, who takes full responsibility for the content of the published article.


\begin{thebibliography}{99}

\bibitem{Beckner}
W.~Beckner,
\emph{Inequalities in Fourier analysis},
Ann. of Math. (2) \textbf{102} (1975), no.~1, 159--182.


\bibitem{BobkovChistyakovDensity}
S.~G. Bobkov and G.~P. Chistyakov,
\emph{Bounds on the maximum of the density for sums of independent random variables},
J. Math. Sci. (N.Y.) \textbf{199} (2014), no.~2, 100--106.


\bibitem{BobkovChistyakov}
\bysame,
\emph{Entropy power inequality for the R\'enyi entropy},
IEEE Trans. Inform. Theory \textbf{61} (2015), no.~2, 708--714.

\bibitem{BobkovGotze}
S.~G. Bobkov and F.~G\"otze,
\emph{Berry--Esseen bounds in the local limit theorems},
Lithuanian Math. J. \textbf{65} (2025), no.~1, 50--66.

\bibitem{BrascampLieb}
H.~J. Brascamp and E.~H. Lieb,
\emph{Best constants in Young's inequality, its converse, and its
generalization to more than three functions},
Adv. Math. \textbf{20} (1976), no.~2, 151--173.


\bibitem{ChasapisSinghTkocz}
G.~Chasapis, S.~Singh, and T.~Tkocz,
\emph{Entropies of sums of independent gamma random variables},
J. Theoret. Probab. \textbf{36} (2023), no.~2, 1227--1242.

\bibitem{ChenJiaJiao}
W.~Chen, L.~B. Jia, and Y.~Jiao,
\emph{H\"older's inequalities involving the infinite product and their
applications in martingale spaces},
Anal. Math. \textbf{42} (2016), no.~2, 121--141.

\bibitem{DemboCoverThomas}
A.~Dembo, T.~M. Cover, and J.~A. Thomas,
\emph{Information theoretic inequalities},
IEEE Trans. Inform. Theory \textbf{37} (1991), no.~6, 1501--1518.


\bibitem{JessenWintner}
B.~Jessen and A.~Wintner,
\emph{Distribution functions and the Riemann zeta function},
Trans. Amer. Math. Soc. \textbf{38} (1935), no.~1, 48--88.

\bibitem{Karakostas}
G.~L. Karakostas,
\emph{An extension of H\"older's inequality and some results on infinite
products},
Indian J. Math. \textbf{50} (2008), no.~2, 303--307.

\bibitem{PestmanTuerlinckxVanpaemel}
W.~Pestman, F.~Tuerlinckx, and W.~Vanpaemel,
\emph{Persistently unbounded probability densities},
Statist. Probab. Lett. \textbf{118} (2016), 135--138.

\bibitem{Petrov}
V.~V. Petrov,
\emph{Sums of independent random variables},
Ergebnisse der Mathematik und ihrer Grenzgebiete, Band 82,
Springer-Verlag, Berlin--Heidelberg, 1975.


\bibitem{Rogozin}
B.~A. Rogozin,
\emph{Estimation of the maximum of a convolution of bounded densities},
Theory Probab. Appl. \textbf{32} (1988), no.~1, 48--56.



\bibitem{Wintner}
A.~Wintner,
\emph{On the densities of infinite convolutions},
Amer. J. Math. \textbf{59} (1937), no.~2, 376--378.

\bibitem{XuMelbourneMadiman}
P.~Xu, J.~Melbourne, and M.~Madiman,
\emph{Infinity-R\'enyi entropy power inequalities},
2017 IEEE International Symposium on Information Theory (ISIT),
IEEE, 2017, pp.~2985--2989.


\end{thebibliography}
\end{document}